\theoremstyle{definition}
\newtheorem{definition}{Definition}[section]
\theoremstyle{plain}
\newtheorem{theorem}[definition]{Theorem}
\newtheorem{proposition}[definition]{Proposition}
\DeclareMathOperator{\rk}{rk}
\DeclareMathOperator{\Spec}{Spec}
\DeclareMathOperator{\MWL}{MWL}
\DeclareMathOperator{\MW}{MW}
\DeclareMathOperator{\triv}{Triv}
\DeclareMathOperator{\Hom}{Hom}
\title[Dynamics on supersingular K3 surfaces]{ Dynamics on supersingular K3 surfaces and automorphisms of Salem degree 22}
\author{Simon Brandhorst}
\address{Insitut für Algebraische Geometrie, Leibniz Universität Hannover,
	Welfengarten 1, 30167 Hannover, Germany}
\email{brandhorst@math.uni-hannover.de}
\date{July 8, 2015}
\begin{document}

\begin{abstract}
In this note we exhibit explicit automorphisms of maximal Salem degree $22$ on the supersingular K3 surface of Artin 
invariant one for all primes $p\equiv 3 \mod 4$ in a systematic way. Automorphisms of Salem degree $22$ do not lift
to any characteristic zero model.
\end{abstract}
\maketitle
\section{Introduction}
To a continuous, surjective self-map  $f$ of a compact metric space one can associate its topological entropy. Roughly speaking, this number measures how fast general points spread out under the iterations of the automorphism. By work of Gromov and Yomdin, on a compact Kähler manifold $X$ the topological entropy can be calculated in terms of the action of $f$ on the cohomology groups $H^*(X,\mathbb{Z})$. In that case, the topological entropy
is either zero or the logarithm of an algebraic number. More precisely for K3 surfaces it is a Salem number. The degree of its minimal polynomial over $\mathbb{Q}$ is called its Salem degree.  Esnault and Srinivas \cite{esnault:algebraic entropy} have extended the notion of entropy to projective
surfaces over an algebraically closed field of arbitrary characteristic. \\

Salem numbers of degree 22 were used by McMullen \cite{McMullen:siegel} to construct K3 surfaces
admitting an automorphism with Siegel disks. These are domains on which $f$ acts by an irrational rotation.
Since the Salem degree of a projective surface is bounded by its Picard number $\rho \leq 20$, McMullen's examples can not be projective. They remain abstract objects.\\

However, in positive characteristic there exist K3 surfaces with Picard number $22$ so there may exist automorphisms of Salem degree $22$. As pointed out by Esnault and Oguiso \cite{esnault oguiso:non liftability}, a specific feature of such automorphisms is that they do not lift to any characteristic zero model. 
One such surface is the supersingular K3 surface $X(p)$ of Artin invariant one defined over $\overline{\mathbb{F}}_p$. 
Abstractly, Blanc, Cantat \cite{blanc cantat:non lift}, Esnault, Oguiso and Yu \cite{esnault oguiso yu:maximum salemdeg} have proved the existence of automorphisms of Salem degree 22 on $X(p)$ for $p\neq 5,7,13$ 
while Shimada \cite{shimada:dynamics} has exhibited such automorphisms 
on every supersingular K3 surface in all odd characteristics $p \leq 7919$ using double plane involutions.
Meanwhile Schütt \cite{schuett:supersingular} has exhibited explicit automorphisms of Salem degree 22 
on $X(p)$ for all $p\equiv 2 \mod 3$ using elliptic fibrations.\\

Building on his methods we obtain the main result of this paper:

\begin{theorem}\label{thm:salem deg22}.
 The supersingular K3 surface $X(p)$ of Artin invariant one admits explicit automorphisms of Salem degree 22 for all 
 primes $p \equiv 3 \mod 4$. Such automorphisms do not lift to any characteristic zero model of $X(p)$.
\end{theorem}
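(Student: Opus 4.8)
The plan is to transpose Sch\"utt's construction from the order-three symmetry available when $p\equiv 2\bmod 3$ to the order-four symmetry available when $p\equiv 3\bmod 4$. First I would fix an explicit Jacobian elliptic fibration $\pi\colon X(p)\to\mathbb{P}^1$ whose generic fibre has $j$-invariant $1728$. Since $p\equiv 3\bmod 4$, the curve $y^2=x^3+x$ is supersingular, so such a fibration exists uniformly for every such prime and its smooth relatively minimal total space is the supersingular K3 surface of Artin invariant one. Via the Shioda--Tate formula I would identify $\mathrm{NS}(X(p))=\triv\oplus\MWL$ up to finite index, record the reducible fibres and hence the root part of $\triv$, and compute a Gram matrix and generators of the Mordell--Weil lattice $\MWL$. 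The fibrewise complex multiplication by $i$ endows $\MWL$ with the structure of a $\mathbb{Z}[i]$-module, which later organises the eigenvalue bookkeeping.

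Next I would produce an automorphism of positive entropy. The subtlety is that any automorphism preserving $\pi$ fixes the fibre class $F$ and is therefore parabolic; in particular fibrewise multiplication by $i$ composed with a translation $t_P$ has finite order, since $1+i+i^2+i^3=0$. To obtain a loxodromic isometry I would instead use an explicit order-four automorphism $\sigma$ of $X(p)$ --- whose existence for all $p\equiv 3\bmod 4$ again rests on the supersingularity of $j=1728$ --- that carries $\pi$ to a second elliptic fibration with a distinct fibre class. Composing the associated translations, equivalently setting $f=t_P\circ\sigma$ for a suitable non-torsion section $P$, gives an isometry $f^\ast$ that fixes no isotropic class and is therefore loxodromic. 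I would then write $f^\ast$ as an explicit integral matrix on a basis of $\mathrm{NS}(X(p))$ adapted to the Shioda--Tate decomposition.

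The crux is to show that the characteristic polynomial $\chi(t)$ of $f^\ast$ is a Salem polynomial of degree $22$. Because $\mathrm{NS}(X(p))$ has rank $22$ and signature $(1,21)$, it suffices to prove that $\chi$ is irreducible over $\mathbb{Q}$ with exactly one real root $\lambda>1$, its reciprocal $1/\lambda$, and all remaining roots on the unit circle; the spectral radius $\lambda$ then has minimal polynomial $\chi$, so the Salem degree attains the maximal value $22$. I would compute $\chi$ from the explicit matrix, using the $\mathbb{Z}[i]$-structure to factor the computation through a rank-$11$ Hermitian lattice, and verify the Salem conditions by a Sturm-type estimate on the reciprocal polynomial, uniformly in $p$. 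To realise $f^\ast$ by an honest automorphism one invokes Ogus's crystalline Torelli theorem: after composing with a product of reflections in $(-2)$-classes so that an ample chamber is preserved, one checks that $f^\ast$ is compatible with the characteristic subspace of the crystalline period, which returns a genuine automorphism $f$ with $f^\ast$ as its cohomological action.

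Non-liftability is then immediate from Esnault--Oguiso: any characteristic-zero model would be a K3 surface of Picard number at most $20$, forcing Salem degree at most $20$, so an automorphism of Salem degree $22$ cannot lift. I expect the main obstacle to be the Salem computation of the third paragraph --- proving, uniformly for all $p\equiv 3\bmod 4$, that $\chi(t)$ is irreducible of degree exactly $22$ rather than the product of a lower-degree Salem factor with cyclotomic polynomials. The prime $p$ enters only through the $p$-divisible gluing between $\triv$ and $\MWL$ and through the characteristic subspace in the Torelli step, so I anticipate that a single matrix, independent of $p$, governs the Salem property, with $p$ surviving only in the realisation.
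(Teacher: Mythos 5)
Your skeleton (isotrivial fibration with $j=1728$, Shioda--Tate, producing positive entropy by mixing data from more than one elliptic fibration, Esnault--Oguiso for non-liftability) matches the paper's, but the middle of your argument has concrete gaps. First, the existence of an order-four automorphism $\sigma$ ``that carries $\pi$ to a second elliptic fibration'' is asserted, not proved, and the only order-four symmetry that $j=1728$ actually hands you, the fibrewise CM $(x,y)\mapsto(-x,iy)$, \emph{preserves} $\pi$ and hence the fibre class, so it is useless for producing a loxodromic isometry. The paper obtains the second (and third) fibre classes differently: it locates extended ADE-configurations of $(-2)$-curves inside the explicitly computed $NS(X(p))$, giving new fibrations $\pi'$ (fibres $I_{16}+I_4$) and $\pi''$ (fibres $I_{12}+IV^*$), and takes $f=(\oplus R)\circ(\oplus P)\circ(\oplus P')\circ(\oplus P'')$, a composition of translations by sections of three distinct fibrations. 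Because these are honest geometric automorphisms, the entire Ogus--Torelli realisation step you propose is unnecessary; it is also dangerous as you state it, since composing $f^*$ with reflections in $(-2)$-classes to recover an ample chamber changes the characteristic polynomial, so the Salem verification would have to be redone for the corrected isometry.

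Second, your closing expectation that ``a single matrix, independent of $p$, governs the Salem property'' is false for this construction and hides the real work. The extra sections needed to generate $NS(X(p))$ are produced by a purely inseparable base change of degree $p$ from a rational elliptic surface, and their intersection numbers genuinely depend on $p$ (writing $p=4n+3$ one gets $O.P=Q.P=n$ and $P.R=2n$), so the Gram matrix, the matrix of $f^*$, and the characteristic polynomial $x^{11}g(x+1/x)$ are all polynomials in $n$. The uniform-in-$p$ Salem verification is then not a Sturm estimate but an application of the McMullen/Esnault--Srinivas factorisation theorem: the characteristic polynomial of $f^*$ on $NS$ is a product of cyclotomic polynomials and at most one Salem factor, so one only has to check that none of the finitely many cyclotomic polynomials of degree $\le 22$ divides it, which is a finite computation with remainders that are nonzero polynomials in $n$. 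You correctly flag the danger of a lower-degree Salem factor times cyclotomics, but you do not supply the mechanism that disposes of it.
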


Let $X$ be the K3 surface over $\mathbb{Q}$ defined by $y^2=x^3+t^3(t-1)^2x$. For $p \equiv 3 \mod 4$ its specialization mod $p$ is the supersingular K3 surface $X(p)$ of Artin invariant one.
The automorphisms are constructed in the following steps:
\begin{itemize}
 \item[1] Find generators of $NS(X)$ using the elliptic fibration.
 \item[2] Complement the generators of $NS(X)$ by two sections $P,R$ to generators of $NS(X(p))$ using  a purely inseparable base change.
 \item[3] Compute the intersection matrix of $NS(X(p))$.
 \item[4] Search for extended ADE-configurations of $(-2)$-curves in $NS(X)$. These induce elliptic fibrations on $X$.
 \item[5] $P$ and $R$ induce sections of the new elliptic fibration.
 \item[6] The sections induce automorphisms of $X(p)$. Compute their action on $NS(X(p))$.
 \item[7] Compose automorphisms obtained from different fibrations to obtain one of the desired Salem degree. 
\end{itemize}

\section{Supersingular K3 surfaces}
A K3 surface over an algebraically closed field $k=\overline{k}$, is a smooth surface with
\[h^1(X,\mathcal{O}_X)=0,\quad \omega_X\cong \mathcal{O}_X.\]
Common examples are smooth quartics in $\mathbb{P}^3$ and double covers of $\mathbb{P}^2$ branched over a smooth
sextic. The group of divisors modulo algebraic equivalence is called Néron Severi group $NS(X)$. Its rank is called the Picard number. 
 Equippied with the intersection pairing $NS(X)$ is a lattice (see Section \ref{sect:lattices}).
A singular K3 surface over $\mathbb{C}$ is one whose Picard number 
\[\rho(X)=20=h^{1,1}(X),\] which is the 
maximal possible. Here 'singular' is meant in the sense of exceptional rather than non-smooth. 
In positive characteristic, however, the Picard number is only bounded by the second Betti number $b_2(X)=22$ and 
K3 surfaces reaching the maximum possible Picard number 
\[\rho(X)=22=b_2(X)\]
are called supersingular.
Supersingular K3 surfaces are classified according to their 
Artin invariant $\sigma$ defined by $\det NS(X)=-p^{2\sigma}, \sigma \in \{1,...10\}$ \cite{artin}.
By work of Ogus \cite{Ogus:supersingular}, there is a unique K3 surface of Artin invariant $\sigma=1$, over 
$k=overline{k}$ of characteristic $p$. We shall denote it by $X(p)$.\\

Supersingular K3 surfaces arise from singular K3 surfaces as follows:
\begin{proposition}\cite{schuett:supersingular}\label{prop:supersingular}
 Let $X$ be a singular K3 surface defined over a number field $L$ and $d=\det NS(X)$. If $\mathfrak{p}$ is a prime of good reduction above $p\in \mathbb{N}$,
 then $X_p:=X\times \Spec \overline{\mathbb{F}}_p$ is supersingular if $p$ is inert in $\mathbb{Q}(\sqrt{d})$.
\end{proposition}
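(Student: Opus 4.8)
The plan is to reduce the statement to the classical Deuring criterion for supersingular reduction of CM elliptic curves, transported through the Shioda--Inose construction.

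First I would recall the lattice-theoretic structure of a singular K3 surface over $\overline{\mathbb{Q}}$. Since $NS(X)$ and the transcendental lattice $T(X)$ are orthogonal complements inside the unimodular K3 lattice $H^2(X,\mathbb{Z})\cong U^3\oplus E_8(-1)^2$, we have $\det T(X)=-\det NS(X)=-d$, and as $NS(X)$ has signature $(1,19)$ the number $d$ is negative, so $K:=\mathbb{Q}(\sqrt d)=\mathbb{Q}(\sqrt{-\det T(X)})$ is an imaginary quadratic field. Here $T(X)$ has rank $2$, the weight-two Hodge structure on $T(X)\otimes\mathbb{Q}$ has $T^{1,1}=0$, and its ring of Hodge endomorphisms is exactly $K$; that is, $T(X)$ has complex multiplication by $K$. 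The whole problem is thus to control the two ``transcendental'' Frobenius eigenvalues after reduction.

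Second I would exploit good reduction. By smooth proper base change $H^2_{\mathrm{et}}(X_p,\mathbb{Q}_\ell)\cong H^2_{\mathrm{et}}(X_{\overline{\mathbb{Q}}},\mathbb{Q}_\ell)$, and the specialization map gives an injection $NS(X)\hookrightarrow NS(X_p)$ respecting the pairing, so $\rho(X_p)\geq 20$ and the $20$-dimensional algebraic part contributes only crystalline slope $1$ (equivalently Frobenius eigenvalues $q\zeta$ with $\zeta$ a root of unity and $q=|\kappa(\mathfrak p)|$). By Artin's theorem $X_p$ is supersingular, i.e.\ $\rho(X_p)=22$, if and only if the formal Brauer group has infinite height, i.e.\ the Newton polygon of $H^2_{\mathrm{crys}}(X_p)$ is the straight line of slope $1$. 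Since the algebraic classes already account for $20$ slopes equal to $1$, it remains exactly to show that the rank-two transcendental part reduces to slope $1$.

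Third --- the heart of the argument --- I would transport the CM structure through the Shioda--Inose construction. Over $\overline{\mathbb{Q}}$ the surface $X$ admits a Shioda--Inose structure relating it by dominant rational maps and isogenies to the Kummer surface of a product $E\times E'$ of elliptic curves with complex multiplication by $K$, in such a way that the transcendental motive of $X$ is identified (up to Tate twist and the algebraic part) with $H^1(E)\otimes H^1(E')$. Now the hypothesis that $p$ is inert in $K$ enters: by Deuring's criterion the CM elliptic curves $E,E'$ have supersingular reduction at the primes above $p$, so $H^1$ of each reduction is isoclinic of slope $1/2$. Since Newton slopes add under tensor product, the transcendental part acquires slope $1/2+1/2=1$. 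Combined with the previous step this shows all slopes equal $1$, whence $X_p$ is supersingular; finally base change to $\overline{\mathbb{F}}_p$ preserves $\rho=22$ because attaining the maximal Picard number is a geometric property.

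I expect the main obstacle to be the compatibility of the Shioda--Inose data with reduction modulo $\mathfrak p$: one must ensure that $E,E'$, the Kummer surface, and the various isogenies and rational maps all have good reduction, that the CM by $K$ persists after reduction (so that Deuring's criterion applies to the reduced curves), and that the identification of transcendental pieces is compatible with the crystalline and $\ell$-adic comparisons. A cleaner but more abstract alternative, bypassing Shioda--Inose, is to argue directly on the two-dimensional CM Galois (or crystalline) representation $T(X)\otimes\mathbb{Q}_\ell$: since $p$ is inert, $K\otimes\mathbb{Q}_p$ is the unramified quadratic field $K_{\mathfrak p}$ and the two $p$-adic places of $K$ are interchanged by Frobenius, so the Shimura--Taniyama formula forces the two slopes to average to $1$; there the obstacle is instead to justify that the complex-analytic CM really governs the local Frobenius at $\mathfrak p$, which again rests on good reduction of the underlying CM motive.
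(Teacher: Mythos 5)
The paper itself gives no proof of this proposition---it is quoted from Schütt, who in turn cites Shimada---and your argument is essentially the proof in those sources: pass through the Shioda--Inose structure to a product $E\times E'$ of elliptic curves with CM by $\mathbb{Q}(\sqrt d)$, and apply Deuring's criterion at the inert prime. The one step where your variant needs more care than the cited proof is the final descent from ``all Newton slopes equal $1$'' to $\rho(X_p)=22$: Artin-supersingularity implies Shioda-supersingularity only via the Tate conjecture for K3 surfaces (or Artin's own theorem in the elliptic case), whereas Shimada avoids this by comparing Picard numbers directly --- $\mathrm{Km}(E_p\times E_p')$ is Shioda-supersingular for supersingular $E_p,E_p'$ in odd characteristic, and the degree-two dominant rational maps in both directions between $X_p$ and the Kummer surface identify the transcendental parts up to isogeny, forcing $\rho(X_p)=22$. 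Either route closes the argument, but you should say explicitly which one you are invoking; as written, the Newton-polygon step silently assumes the Tate conjecture.
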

As noticed by Shimada \cite{shimada:supersing}, if $\det NS(X)$ is coprime to $p$, then the Artin invariant is $\sigma=1$. The reason for this is that $NS(X) \hookrightarrow NS(X_p)$ implies $\sigma = 1$. 

\section{Automorphisms and Dynamcis}

A \emph{Salem polynomial} is an irreducible monic polynomial $S(x)\in \mathbb{Z}[x]$ of degree $2d$ such that its complex roots are of the form
\[ a>1,\; a^{-1}<1, \quad \alpha_i,\overline{\alpha}_i \in S^1, \; i\in \{1, ... ,d-1\}.\]
The unique positive root $a$ outside the unit disk is called
\emph{Salem number}. \\

The following theorem is due to McMullen \cite{McMullen:siegel} over $\mathbb{C}$ and Esnault and Srinivas \cite{esnault:algebraic entropy} in positive characteristic.
\begin{theorem}\label{thm:salem factor}
  Let $X$ be a smooth projective K3 surface over an algebraically closed field $k=\overline{k}$ and $f\in Aut(X)$ an automorphism.
  Then the characteristic polynomial of $f^*|H^2_{\acute{e}t}(X,\mathbb{Q}_l(1)), \; (char \; k \neq l)$ factors into cyclotomic polynomials and at most one
  Salem polynomial. The Salem factor occurs in $f^*|NS(X)$.
\end{theorem}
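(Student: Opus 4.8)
The plan is to use that $f^*$ is an isometry of the cup-product pairing and to pin down the part of the spectrum lying off the unit circle by a signature argument, carried out separately on $NS(X)$ and on its orthogonal complement. First I would record the structural inputs. Since $f$ is an automorphism, both $f^*$ and $(f^{-1})^*=(f^*)^{-1}$ preserve the integral lattice $H^2_{\acute{e}t}(X,\mathbb{Z}_l(1))$ together with the cup-product pairing, which is perfect and $\mathbb{Q}_l$-valued after the Tate twist. Hence the characteristic polynomial $P(x)$ lies in $\mathbb{Z}[x]$, is monic of degree $b_2(X)=22$, and every eigenvalue $\lambda$ as well as its inverse $\lambda^{-1}$ is an algebraic integer; so each eigenvalue is a unit and $\prod_i\lambda_i=\det(f^*)=\pm1$. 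Because $f^*$ is an isometry its Gram matrix relation $g^{t}Bg=B$ shows $g^{t}$ is conjugate to $g^{-1}$, whence $P(x)=\pm x^{22}P(1/x)$ is reciprocal: the multiset of roots is stable under $\lambda\mapsto\lambda^{-1}$.

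Next I would analyze the action on $NS(X)$, where I can work over $\mathbb{Z}$ (hence over $\mathbb{R}$) since the integral matrix is independent of $l$. The cycle class map embeds $NS(X)\otimes\mathbb{Q}_l$ isometrically into $H^2_{\acute{e}t}(X,\mathbb{Q}_l(1))$ as an $f^*$-stable subspace, and by the Hodge index theorem $NS(X)\otimes\mathbb{R}$ has signature $(1,\rho-1)$. If $v,w$ are eigenvectors with eigenvalues $\lambda,\mu$ and $\langle v,w\rangle\neq0$, then $\lambda\mu=1$; consequently the sum of the generalized eigenspaces with $|\lambda|>1$ is conjugation-stable and isotropic, hence defines a real isotropic subspace of dimension at most $\min(1,\rho-1)\le1$. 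So $f^*|NS(X)$ has at most one eigenvalue $a$ of modulus $>1$, and when it exists it is real and simple. Reciprocity then forces $a^{-1}$ to be the unique eigenvalue of modulus $<1$ and all remaining eigenvalues to lie on $S^1$; since $a$ is a unit the product of its conjugates is $\pm1$, forcing $a^{-1}$ to be a conjugate of $a$ while the others lie on $S^1$. By Kronecker's theorem those conjugates are not roots of unity, so the minimal polynomial of $a$ is a Salem polynomial, and every other irreducible factor of $\det(xI-f^*|NS(X))$ has all roots on $S^1$ and is therefore cyclotomic.

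It remains to control the orthogonal complement $T=(NS(X)\otimes\mathbb{Q}_l)^{\perp}$, which is $f^*$-stable and complementary to $NS(X)\otimes\mathbb{Q}_l$, so that $P$ is the product of the two characteristic polynomials; the crux, and the step I expect to be the main obstacle, is to prove that $f^*|T$ has all eigenvalues on $S^1$. Over $\mathbb{C}$ I would argue via the Hodge decomposition, which $f^*$ preserves: on $T^{2,0}=\mathbb{C}\omega_X$ the map acts by a scalar $\delta$, and from $\langle\omega_X,\overline{\omega}_X\rangle>0$ together with $\langle\omega_X,\omega_X\rangle=0$ one gets $|\delta|=1$ (likewise $\overline{\delta}$ on $T^{0,2}$); meanwhile the real transcendental $(1,1)$-space $T^{1,1}_{\mathbb{R}}=\langle\mathrm{Re}\,\omega_X,\mathrm{Im}\,\omega_X\rangle^{\perp}\cap T_{\mathbb{R}}$ is negative definite, so $f^*$ acts on it through the compact group $O(\mathrm{rk}\,T-2)$ and all its eigenvalues have modulus $1$. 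Hence the spectrum of $f^*|T$ lies on $S^1$, is a product of cyclotomic factors by Kronecker, and contributes no Salem factor. This simultaneously shows there is at most one Salem factor in $P$ and that it is realized on $NS(X)$, proving both assertions.

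In positive characteristic the Hodge-theoretic input that makes the transcendental complement behave like a definite space is unavailable, so this is precisely the part of the argument that does not transport verbatim. There I would instead invoke the extension of this analysis by Esnault and Srinivas \cite{esnault:algebraic entropy}, who provide the substitute showing that $f^*$ acts on the transcendental part of $H^2_{\acute{e}t}$ with roots of unity as eigenvalues; granting this, the factorization of $P$ and the location of the Salem factor in $f^*|NS(X)$ follow exactly as above. The genuine obstacle is therefore entirely concentrated in establishing the definiteness-type control of $f^*|T$ in the absence of a real Hodge structure.
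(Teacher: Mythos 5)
The paper does not actually prove this statement: it is imported from McMullen (over $\mathbb{C}$) and Esnault--Srinivas (positive characteristic), with only the one-line remark that over $\mathbb{C}$ one can argue on singular cohomology via the Hodge decomposition. Your proposal reconstructs precisely that standard argument --- reciprocity of the characteristic polynomial from the isometry property, the signature $(1,\rho-1)$ bound on real isotropic subspaces to isolate a single simple real eigenvalue of modulus $>1$ on $NS(X)$, Hodge theory (resp.\ the Esnault--Srinivas theorem) to force the orthogonal complement of $NS(X)$ to contribute only eigenvalues on the unit circle, and Kronecker's theorem to turn those into cyclotomic factors. So you are on the same route the paper gestures at, just carried out in detail; and you correctly identify that in positive characteristic the control of the transcendental part is exactly the content of the cited Esnault--Srinivas result and cannot be rederived from a real Hodge structure.

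One step is genuinely missing. Your signature argument only yields that the exceptional eigenvalue $a$ is real, simple, and of modulus $>1$; this leaves open the possibility $a<-1$, in which case the minimal polynomial of $a$ is not a Salem polynomial in the sense defined in the paper (unique \emph{positive} root outside the unit disk). To close this you need the additional input that $f^*$ preserves the ample (equivalently nef, or positive) cone in $NS(X)\otimes\mathbb{R}$: a Perron--Frobenius-type argument for a linear map preserving a closed convex cone with nonempty interior then produces an eigenvector in the closure of that cone whose eigenvalue equals the spectral radius, so $a>1$. Also, your sentence ``since $a$ is a unit the product of its conjugates is $\pm1$, forcing $a^{-1}$ to be a conjugate'' should be routed through the observation that all conjugates of $a$ are eigenvalues of $f^*|NS(X)$ and hence, apart from $a$ itself, lie in $S^1\cup\{a^{-1}\}$; only then does the unit condition force $a^{-1}$ to occur. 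With these two points added the argument is complete.
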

Notice that over $\mathbb{C}$ we can work equally well with the singular cohomology groups $H^2(X,\mathbb{Z})$
and prove the result via Hodge decomposition. The theorem motivates the following definition.
Given a smooth projective surface over an algebraically closed field and an automorphism $f:X \rightarrow X$, 
the entropy $h(f)$ of $f$ is defined as 
\[h(f):=\log r(f^*|NS(X))\]
where $r(f^*|NS(X))$ is the spectral radius of  the linear map $f^*|NS(X)\otimes \mathbb{Q}$, that is, the maximum of the absolute values 
of its complex eigenvalues. The entropy is either zero or the logarithm of a Salem number $a$. The degree of
the Salem polynomial corresponding to $a$ is called the Salem degree of $f$.\\

By work of H. Esnault and V. Srinivas \cite{esnault:algebraic entropy} this is compatible with
the definition of topological entropy for smooth complex projective surfaces.\\

Over $\mathbb{C}$, due to Hodge theory, the rank of the Néron-Severi group is bounded by $20=h^{1,1}(X)$. 
Hence an automorphism on a complex projective K3 surface has Salem degree at most $20$. 
In the non-projective case McMullen \cite{McMullen:siegel} has constructed K3 surfaces with
automorphisms of Salem degree $22$. In positive characteristic, however, K3 surfaces may have 
Picard rank $22$ and there automorphisms of Salem degree $22$ occur. A specific feature of such automorphisms is that
they do not lift to any characteristic zero model.

\section{Lattices}\label{sect:lattices}
In this section we fix our notation concerning lattices.
A lattice $L$ is a finitely generated free $\mathbb{Z}$ module equipped with a non degenerate symmetric bilinear pairing
$L \times L \rightarrow \mathbb{Q}$. If it is integer valued, then the lattice is called integral. The pairing identifies 
\[\Hom(L,\mathbb{Z}) \cong L^*:=\{x \in L \otimes \mathbb{Q} | x.L \subseteq \mathbb{Z} \}.\] 
If $L$ is integral, it induces an embedding $L \hookrightarrow L^*$. The finite quotient $A_L:=L^*/L$ is called the discriminant group of $L$. Given a $\mathbb{Z}$-basis $(b_i)$ the determinant of the Gram matrix $(b_i.b_j)_{ij}$ is independent of the choice of basis and called the determinant of $L$. Its absolute value is the order of the discriminant group. In this basis the dual lattice is generated by the columns of $(b_{ij})^{-1}$. 
Given an embedding $M \hookrightarrow L$ of integral lattices of the same rank, $A_L$ is a subquotient of $A_M$ and we get
\[\det M=[L:M]^2 \det L .\]
A lattice is called unimodular if it is integral and has $|\det L|=1$. An embedding $M\hookrightarrow L$ of lattices is called primitive if $L/M:=L/im(M)$ is torsion free. 
If $M\hookrightarrow L$ is a primitive embedding of integral lattices and $L$ unimodular, then 
$A_M \cong A_{M^\perp}$, where $M^\perp$ is the orthogonal complement of $M$ in $L$. 
We define the length of a finite abelian group by its minimum number of generators. Note that
$l(A_L)\leq \rk L$. A lattice is called $p$-elementary if $pA_L=0$ or equivalently if $A_L$ is an $\mathbb{F}_p$ vector
space.

\section{Elliptic Fibrations on K3 Surfaces}
A genus one fibration on a surface X is a surjective map 
\[\pi: X \rightarrow  C\]
to a smooth curve $C$ such that the generic fiber is a smooth curve of genus one.
We will call it an elliptic fibration, if the additional data of a section $O$ of $\pi$ is given.
Indeed, all genus one fibrations occurring throughout this note admit a section.
This turns the generic fiber of an elliptic fibration $E(K)$ into an elliptic curve over $K=k(C)$, the function field of $C$.
For a K3 surface $C=\mathbb{P}^1$ is the only possibility.
There is a one to one correspondence between rational points of $E$ and sections of $\pi$. 
Both these sets are abelian groups which we will call the Mordell-Weil group of the fibration. It is denoted by
$\MW(X,\pi,O)$ where $\pi$ and $O$ are suppressed from notation if confusion is unlikely. The addition on
$\MW$ is denoted by $\oplus$.\\

A good part of $NS$ is readily available: the trivial lattice 
\[\triv(X,\pi,O):=\langle O,\mbox{fiber components}\rangle_\mathbb{Z}. \]
If $O$ and $\pi$ are understood, we will suppress them from notation.
By results of Kodaira \cite{Kodaira} and Tate \cite{tate:algorithm}, the trivial lattice decomposes as an orthogonal direct sum of a hyperbolic plane spanned by $O$ together with the fiber $F$ and negative definite 
root lattices of type $ADE$ consisting of fiber components.
Note that the singular fibers (except in some cases in characteristics 2 and 3) are determined by the
$j$-invariant and discriminant of the elliptic curve $E(K)$.\\

An advantage of elliptic fibrations is that they structure the Néron-Severi group into
sections and fibers as given by the following theorem.
\begin{theorem}\cite{shioda:mwl}\label{thm:mwl}
 There is a group isomorphism 
 \[\MW(X)\cong NS(X)/\triv(X).\]
\end{theorem}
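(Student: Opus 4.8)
The plan is to realize both $\MW(X)$ and $NS(X)/\triv(X)$ as descriptions of the Picard group of the generic fiber and to match them via restriction of divisors. Write $K=k(C)$ and let $E$ be the generic fiber of $\pi$, an elliptic curve over $K$ with origin the restriction of $O$. The degree map together with the Abel--Jacobi map $P\mapsto [P]-[O]$ gives the standard isomorphism
\[\Pic(E)\;\cong\;\mathbb{Z}\oplus\Pic^0(E),\qquad \Pic^0(E)\cong E(K)=\MW(X),\]
where the last identification is precisely the correspondence between rational points and sections recalled above. Hence $\MW(X)\cong\Pic(E)/\mathbb{Z}[O]$, and the task reduces to producing an isomorphism $\Pic(X)/V\xrightarrow{\sim}\Pic(E)$, where $V\leq\Pic(X)$ is the subgroup generated by the classes of all fiber components (the \emph{vertical} divisors), carrying the class of $O$ to the degree-one generator $[O]$.

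The main tool is the localization (excision) sequence for divisor class groups of the smooth surface $X$. First I would observe that $k(X)$ coincides with the function field $K(E)$ of the generic fiber, and that restriction to $E$ sends a horizontal prime divisor (one dominating $C$) to its generic point and a vertical one to $0$; this defines $\rho\colon\Div(X)\to\Div(E)$, which kills principal divisors and so descends to $\bar\rho\colon\Pic(X)\to\Pic(E)$. I would then prove the two halves of the localization sequence. Surjectivity of $\bar\rho$ holds because every closed point of $E$ is the restriction of its Zariski closure in $X$, a horizontal prime divisor, so every class is hit. For exactness in the middle, $V\subseteq\ker\bar\rho$ is clear, and conversely, if $\bar\rho(D)=0$ then $\rho(D)=\operatorname{div}_E(g)$ for some $g\in K(E)^\ast=k(X)^\ast$; subtracting $\operatorname{div}_X(g)$ produces a representative of the class of $D$ whose restriction to $E$ vanishes, hence a divisor with no horizontal components, i.e.\ an element of $V$. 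Concretely this is the identification $\Pic(E)=\varinjlim_{U\subseteq C}\Pic(\pi^{-1}(U))=\Pic(X)/V$.

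With $\bar\rho\colon\Pic(X)/V\xrightarrow{\sim}\Pic(E)$ in hand, I would track the zero section, whose class maps to $[O]$, the degree-one generator of the $\mathbb{Z}$ summand. Quotienting both sides by this class yields
\[NS(X)/\triv(X)\;=\;\bigl(\Pic(X)/V\bigr)\big/\langle O\rangle\;\xrightarrow{\ \sim\ }\;\Pic(E)/\mathbb{Z}[O]\;=\;E(K)\;=\;\MW(X),\]
using that $\triv(X)=\langle O,\text{fiber components}\rangle=\langle O\rangle+V$, and, crucially, that $NS(X)=\Pic(X)$ for a K3 surface: since $h^1(X,\mathcal{O}_X)=0$ forces $\Pic^0(X)=0$, algebraic and linear equivalence coincide. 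Unwinding the maps, a section $P$ is sent to its own class modulo $\triv(X)$, the inverse of the Shioda homomorphism.

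I expect the delicate point to be the exactness $\ker\bar\rho=V$: this is where the geometry of the fibration genuinely enters, resting on the fact that the only prime divisors restricting trivially to the generic fiber are the fiber components, together with the comparison of $\Pic(X)$ with $\Pic(E)$ through the colimit over opens of $C$. The remaining ingredients---surjectivity of $\bar\rho$, the structure of $\Pic(E)$, and the passage from $\Pic$ to $NS$---are formal once this localization sequence is established.
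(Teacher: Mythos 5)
The paper offers no proof of this statement---it is quoted directly from Shioda's \emph{On the Mordell--Weil lattices}---and your argument is a correct reconstruction of the standard proof given there: restriction to the generic fiber yields the localization sequence identifying $\Pic(X)/V$ with $\Pic(E)$, and quotienting by the class of $O$ matches $NS(X)/\triv(X)$ with $\Pic^0(E)\cong E(K)$. The only point worth flagging is that you use the K3 hypothesis $h^1(X,\mathcal{O}_X)=0$ to identify $\Pic(X)$ with $NS(X)$, which is exactly what is needed in the context of this paper (Shioda's general statement for arbitrary elliptic surfaces requires a separate argument to pass from $\Pic$ to $NS$), and the minor wording slip that $\rho$ ``kills'' principal divisors should read that it carries them to principal divisors, so that the composite into $\Pic(E)$ descends.
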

The Mordell-Weil group can be equipped with a positive definite symmetric $\mathbb{Q}$-valued bilinear form via the 
orthogonal projection with respect to $\triv(X)$ in $NS(X) \otimes \mathbb{Q}$ and switching sign.
Explicitly, it is given as follows: 
Let $P,Q \in \MW(X,O)$ and denote by $R$ the set of singular fibers of the fibration, then 
\[\langle P , Q \rangle:= \chi (\mathcal{O}_X) + P.O + Q.O -P.Q - \sum_{\nu \in R}c_\nu(P,Q)\]
\[ \langle P , P \rangle:= 2 \chi(\mathcal{O}_X) + 2 P.O - \sum_{\nu \in R}c_\nu(P,Q)\]
where the dot denotes the intersection product on the smooth surface $X$. The term $c_\nu(P,Q)$ 
is the local contribution at a singular fiber, given as follows. If one of the two sections involved meets the same component of the singular fiber $\nu$ as the zero section, then the contribution at $\nu$ is zero. If this is not the case, the contribution is non zero and depends
of the fiber type. We only need the types $III$, $III^*$ and $I_0^*$, for the others consult \cite{shioda:mwl}. If $\nu$ is of type 
$III^*$ (resp. $III$), the contribution is equal to $3/2$ (resp. $1/2$) if $P$ and $Q$ meet the same component of $\nu$ and zero otherwise.
For $\nu$ of type $I^*_0$ we have $c_\nu(P,Q)=1$ if they meet in the same component and $1/2$ otherwise.
Equipped with this pairing $\MWL(X):=\MW(X)/$torsion is a positive definite lattice. In general it is not integral.

\section{Isotrivial Fibration}
In this section we will compute generators of the Néron-Severi group as well as their intersection matrix.
\begin{proposition}
Let $X$ be the K3 surface over $\mathbb{C}$ defined by the Weierstrass equation 
\[X: \quad y^2=x^3+t^3(t-1)^2x.\]
Then its Néron-Severi group is generated by fiber components, the zero section and the 2-torsion section $Q=(0,0)$. 
It is of rank 20 and determinant $-4$.
\end{proposition}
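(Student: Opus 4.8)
The plan is to study the elliptic fibration $\pi\colon X \to \mathbb{P}^1$ given by projection to the $t$-line, whose Weierstrass equation has the shape $y^2 = x^3 + a_4 x$ with $a_4 = t^3(t-1)^2$ and $a_6 = 0$; in particular the $j$-invariant is the constant $1728$ and $\pi$ is isotrivial. The first step is to read off the reducible fibers. The discriminant is $\Delta = -64\,a_4^3 = -64\,t^9(t-1)^6$, and since $j$ is constant the Kodaira type at each place depends only on $v(a_4)$: by Tate's algorithm $v(a_4)=3$ yields type $III^*$ and $v(a_4)=2$ yields type $I_0^*$. This gives type $III^*$ over $t=0$, type $I_0^*$ over $t=1$, and---after the substitution $t = 1/s$, $x = s^{-4}x'$, $y = s^{-6}y'$, which replaces $a_4$ by $s^3(1-s)^2$---type $III^*$ over $t=\infty$. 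The Euler numbers $9+6+9 = 24$ confirm that there are no further singular fibers.

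It follows that the trivial lattice is $\triv(X) = U \oplus E_7 \oplus E_7 \oplus D_4$, where $U$ is the hyperbolic plane spanned by $O$ and a general fiber $F$ and the negative definite root lattices come from the three reducible fibers. Its rank is $2+7+7+4 = 20$ and its determinant is $\det U \cdot \det E_7 \cdot \det E_7 \cdot \det D_4 = (-1)(-2)(-2)(4) = -16$. Since $\triv(X) \subseteq NS(X)$ while Hodge theory over $\mathbb{C}$ forces $\rk NS(X) \le h^{1,1}(X) = 20$, I obtain $\rk NS(X) = 20$; by Theorem \ref{thm:mwl} the group $\MW(X) \cong NS(X)/\triv(X)$ is therefore finite and equals its own torsion subgroup.

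The crux is to identify this torsion. The standard embedding of the torsion sections into the product $\bigoplus_\nu G_\nu$ of component groups (valid because the narrow Mordell-Weil lattice is positive definite, hence torsion free) gives $\MW(X) \hookrightarrow A_{E_7}\oplus A_{E_7}\oplus A_{D_4} \cong (\mathbb{Z}/2)^4$, so every section is $2$-torsion. On the other hand the $2$-torsion of the generic fiber $y^2 = x\bigl(x^2 + t^3(t-1)^2\bigr)$ consists of $Q=(0,0)$ together with the points $(x,0)$ with $x^2 = -t^3(t-1)^2$, and the latter are not rational over $\mathbb{C}(t)$ because $-t$ is not a square there. Hence the rational $2$-torsion is exactly $\langle Q\rangle \cong \mathbb{Z}/2$, so $\MW(X) = \mathbb{Z}/2$ is generated by $Q$.

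Combining these facts, $NS(X)$ is generated by $\triv(X)$ and the section $Q$---that is, by the fiber components, the zero section and $Q = (0,0)$---and $[NS(X):\triv(X)] = |\MW(X)| = 2$ yields $\det NS(X) = \det\triv(X)/[NS(X):\triv(X)]^2 = -16/4 = -4$. I expect the genuinely delicate step to be the torsion computation: one must use the embedding into the component groups to exclude odd and higher $2$-power torsion, and then verify directly that only one of the three nonzero $2$-torsion points of the generic fiber is defined over $\mathbb{C}(t)$.
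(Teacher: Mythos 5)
Your proof is correct and follows essentially the same route as the paper: read off the singular fibers ($2\times III^*$ and $I_0^*$) from $j$ and $\Delta$, deduce that the trivial lattice $U\oplus 2E_7\oplus D_4$ already has the maximal rank $20$ so that $\MW(X)$ is torsion, and then identify the torsion as $\mathbb{Z}/2$ generated by $Q$. The only difference is cosmetic and lies in the last step: you bound the torsion via its embedding into the component groups $(\mathbb{Z}/2)^4$ and then compute the rational $2$-torsion of the generic fiber explicitly, whereas the paper uses the determinant $-16$ of the trivial lattice together with the singular fibers (or, alternatively, the non-existence of an even unimodular lattice of signature $(1,19)$); both arguments are standard and equally valid.
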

\begin{proof}
By the theory of Mordell-Weil lattices, $NS(X)$ is spanned by fiber components and sections.
The elliptic fibration has $j$-invariant equal to $1728$ and discriminant $\Delta=-2^6t^9(t-1)^6$.
Using the classification of singular fibers by Kodaira and Tate, we get that the fibration has 
two fibers of type $III^*$ over $t=0,\infty$ and one fiber of type $I_0^*$ over $t=1$.
Hence the trivial lattice $L$ is of the form
$L\cong U\oplus 2 E_7 \oplus D_4$. Since it is of the maximum possible rank $20$ the fibration admits no section of infinite order.
The determinant $-16$ of the trivial lattice implies that only 2- or 4-torsion may appear.
Obviously, $Q=\{x=y=0\}$ is the only 2-torsion section and 4-torsion may not occur due to the singular fibers. 
Alternatively, the reader may note that additional torsion turns $NS(X)$ into a unimodular lattice of signature $(1,19)$. Such a lattice does not exist.
\end{proof}
Note that the $j$-invariant $j=1728$ is constant. Hence all smooth fibers are isomorphic - such a fibration is called
isotrivial.

\begin{proposition}
 For $p \equiv 3 \mod 4$ the surface $X(p):=X\otimes  \overline{\mathbb{F}}_p$ is the supersingular K3 surface of Artin invariant one.
\end{proposition}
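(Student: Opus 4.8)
The plan is to combine the lattice data of the preceding proposition with Proposition~\ref{prop:supersingular} and Shimada's observation on the Artin invariant. The preceding proposition shows that, viewed over $\mathbb{C}$, the surface $X$ is a \emph{singular} K3 surface: its Picard number equals the maximal value $20$ and $d:=\det NS(X)=-4$. Since $X$ is cut out by a Weierstrass equation with coefficients in $\mathbb{Q}$, it is defined over the number field $L=\mathbb{Q}$, so the only thing needed to invoke Proposition~\ref{prop:supersingular} is a prime $p\equiv 3\mod 4$ of good reduction.

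First I would check good reduction at every odd prime. The discriminant $\Delta=-2^6t^9(t-1)^6$ has integer content $2^6$, so the only rational prime dividing it is $2$; moreover the $j$-invariant is the constant $1728$. The three singular fibers sit over $t=0,1,\infty$, which remain three distinct points of $\mathbb{P}^1$ after reduction modulo any $p$, and their Euler numbers $9+6+9=24$ already account for a K3 surface. Hence for odd $p$ the reduction preserves the Weierstrass model and the configuration of singular fibers, i.e.\ two fibers of type $III^*$ and one of type $I_0^*$, so $X_p$ is again a smooth K3 surface whose trivial lattice is $U\oplus 2E_7\oplus D_4$; in particular $p$ is a prime of good reduction and the specialization map $NS(X)\hookrightarrow NS(X_p)$ exists.

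Next comes the arithmetic of the quadratic field. From $d=-4$ we get $\mathbb{Q}(\sqrt{d})=\mathbb{Q}(\sqrt{-1})=\mathbb{Q}(i)$, and an odd prime $p$ is inert in $\mathbb{Q}(i)$ exactly when $-1$ is a non-residue modulo $p$, that is, when $p\equiv 3\mod 4$. For such $p$, Proposition~\ref{prop:supersingular} yields that $X_p=X(p)$ is supersingular. To pin down the Artin invariant, note that $p$ is odd and hence coprime to $d=-4$; by Shimada's remark the embedding $NS(X)\hookrightarrow NS(X_p)$ then forces the discriminant group of $NS(X_p)$ to have length $2$, so $\sigma=1$. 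By Ogus' uniqueness of the supersingular K3 surface of Artin invariant one, $X_p$ is $X(p)$, as claimed.

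The only genuinely delicate point is the good-reduction step, and within it the characteristic $p=3$ case, which is admissible since $3\equiv 3\mod 4$ yet falls into the range where Tate's algorithm behaves differently; there one must confirm that the Weierstrass model still realizes the $2E_7\oplus D_4$ configuration (equivalently, that the reduction does not change the Kodaira types) rather than degenerating. Once good reduction is secured, the field-theoretic and lattice-theoretic steps are immediate.
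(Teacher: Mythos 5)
Your proof follows the same route as the paper: good reduction at odd $p$, $\det NS(X)=-4$ with $p$ inert in $\mathbb{Q}(i)$ precisely for $p\equiv 3 \bmod 4$, Proposition~\ref{prop:supersingular} for supersingularity, and Shimada's length argument on the $p$-part of the discriminant group to force $\sigma=1$. The differences are only presentational: you expand the good-reduction check (which the paper simply asserts for $p\neq 2$) and compress the length argument (which the paper spells out via $NS(X)\oplus NS(X)^\perp\hookrightarrow NS(X(p))$ and $l(A_{NS(X)^\perp})\leq 2$), but the mathematical content is the same.
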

\begin{proof}
The singular K3 surface $X$ has good reduction at $p\neq2$ and $\det NS(X)=-4$. A prime $p$ is inert in $\mathbb{Q}(\sqrt{-1})$ iff $p \equiv 3 \mod 4$.
Thus, by Proposition \ref{prop:supersingular}, for all $p \equiv 3 \mod 4$ the K3 surface $X(p):=X\otimes \overline{\mathbb{F}}_p$ is supersingular, that is, $\rk NS(X(p))=22$. It is known that $NS(X(p))$ is a $p$-elementary lattice of determinant 
$-p^{2\sigma}$ where $\sigma \in \{1,...,10\}$ is called the Artin invariant of $X(p)$. 
Following an argument by Shimada \cite{shimada:supersing} we show that $\sigma=1$:
$NS(X)\hookrightarrow NS(X(p))$ Therefore $NS(X) \oplus NS(X)^\perp \hookrightarrow NS(X(p))$.
Since $p\neq 2$, the $p$-part of $A_{NS(X) \oplus NS(X)^\perp}=A_{NS(X)} \oplus A_{NS(X)^\perp}$ equals that of
$A_{NS(X)^\perp}$. Hence it is of length at most two. But the $p$-part of $A_{NS(X(p))}$ is a quotient of this.
We conclude that its $p$-part has length at most two. On the other hand $\sigma \in \{1...10\}$ which implies that 
$2\sigma=l(A_{NS(X(p))})\geq 2$.
\end{proof}

Our next task is to work out generators of $NS(X(p))$ for $p \equiv 3 \mod 4$. By Theorem \ref{thm:mwl} it is generated by sections and fiber components. Reducing $j$ and $\Delta$ $\mod p$, we see that
the fiber types are preserved mod $p$ (even in case $p=3$ cf. \cite{silverman advanced topics}). Hence sections of infinite order must appear. Generally it is hard to compute sections of an elliptic fibration. For this special fibration there is a trick involving a purely inseparable base change of degree $p$ turning $X(p)$ into a Zariski surface.\\

\begin{proposition}
 Let $p=4n+3$ be a prime number. Then the Néron-Severi group of $X(p)$ defined by $y^2=x^3+t^3(t-1)^2x$ over $\overline{\mathbb{F}}_p$ is generated by the sections $O,Q,P,R$ and fiber components, where $O$ denotes the section at infinity, $Q=(0,0)$ is the 2-torsion section, $\zeta^4=-1$ and
 \begin{align*}
P: \;\; &x=\zeta^2t^{2n+3}(t-1),		&& y=\zeta^3t^{n+3}(t-1)^{2n+3},\\
R: \;\; &x=-\zeta^2t^{2n+3}(t-1), 	&&y=-\zeta t^{n+3}(t-1)^{2n+3} .
\end{align*}
We have the following intersection numbers, symmetric in $P$ and $R$:
\[O.P=Q.P=n, P.R=2n. \]
\end{proposition}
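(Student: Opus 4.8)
The plan is to prove the three assertions—that $P,R$ are sections, that together with $O,Q$ and the fibre components they generate $NS(X(p))$, and the three intersection numbers—in that order, with the section-verification being the conceptual heart and the intersection numbers the main computation. First I would check that $P$ and $R$ define sections by substituting the given $(x,y)$ into the Weierstrass equation. Using $\zeta^4=-1$ and cancelling the common factor $\zeta^2(t-1)^3$, both identities collapse to the single relation $(t-1)^{4n+3}=t^{4n+3}-1$. Since $4n+3=p$, this is exactly the Frobenius identity $(t-1)^p=t^p-1$ in characteristic $p$; this is the point where the purely inseparable (Zariski surface) phenomenon enters, and it is what makes the otherwise high-degree formulas consistent. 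In characteristic zero the same $P,R$ would not lie on $X$, reflecting that they genuinely live only on $X(p)$.

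For the intersection numbers I would reduce everything to intersections with the zero section $O$, where the computation is clean. The key observation is that if $S=(x_S,y_S)$ is a section whose $x$-coordinate, written in the minimal Weierstrass model at a point $v\in\mathbb{P}^1$, has a pole of order $2m$ (so $y_S$ has a pole of order $3m$ and $S$ passes through the smooth point $[0:1:0]$ lying on the identity component), then the local intersection multiplicity satisfies $(S.O)_v=\operatorname{ord}_v(x_S/y_S)=m$: in the chart around $[0:1:0]$ the local coordinate $X/Y\sim s^{m}$ along $S$ while $O=\{X/Y=0\}$. Summing over the finitely many poles gives $S.O=\tfrac12\sum_v(\text{pole order of }x_S)$. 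Applying this to $P$ directly, $x_P=\zeta^2t^{2n+3}(t-1)$ is a polynomial, so its only pole is at $t=\infty$; passing to $s=1/t$ and the normalised coordinate $X=xs^4$ gives $X_P\sim\zeta^2s^{-2n}$, a pole of order $2n$, whence $O.P=n$.

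To reach $Q.P$ and $P.R$ I would exploit that translations by Mordell–Weil sections are automorphisms of $X(p)$ and hence preserve intersection numbers. Translating by the $2$-torsion section $Q$ gives $Q.P=O.(P\oplus Q)$; since $Q=(0,0)$ on $y^2=x^3+Ax$ with $A=t^3(t-1)^2$, one has $x_{P\oplus Q}=A/x_P=\zeta^{-2}t^{-2n}(t-1)$, whose only pole is of order $2n$ at $t=0$, giving $Q.P=n$. Translating instead by $\ominus P$ gives $P.R=O.(R\ominus P)$; a short group-law computation using $x_R=-x_P$ yields $x_{R\ominus P}=c\,t^{-2n}(t-1)^{4n+4}$ for a nonzero constant $c$, which has poles of order $2n$ both at $t=0$ and, after normalising at infinity, at $t=\infty$, so $P.R=2n$. (Equivalently $R=\iota(P)$ for the order-$4$ automorphism $\iota\colon(x,y)\mapsto(-x,\zeta^2y)$ coming from the complex multiplication on the $j=1728$ fibre, giving an alternative route.) Generation of $NS(X(p))$ then follows by checking that the full-rank lattice spanned by $O,Q,P,R$ and the fibre components has determinant $-p^2$, matching Artin invariant one; this uses the intersection numbers above together with the height pairing, and hence also requires recording which fibre components $P$ and $R$ meet.

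The step I expect to be the main obstacle is controlling the behaviour at the singular fibres. All three sections $P,Q,R$ pass through the cusp $(0,0)$ of the additive fibres over $t=0$ and $t=1$, so a direct evaluation of $(P.Q)_v$ and $(P.R)_v$ there would force one to resolve the $III^*$ and $I_0^*$ fibres and track proper transforms through the blow-ups. The translation trick is designed precisely to move these meetings to the smooth point $[0:1:0]$ on the identity component, where the pole-order formula applies; the real content is therefore the justification that after translating, the relevant poles occur at \emph{smooth} points with pole orders exactly $(2m,3m)$—that is, that the translated sections meet $O$ away from the singular point of the fibre and demand no further resolution. This verification, together with pinning down the fibre-component incidences of $P$ and $R$ needed for the discriminant computation, is where the genuine work lies.
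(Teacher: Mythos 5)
Your proposal is correct, and for the intersection numbers it takes a genuinely different route from the paper. The paper does not verify the Weierstrass equation or compute local intersection multiplicities directly: it constructs $P$ and $R$ as pull-backs of sections $P',R'$ on the rational elliptic surface $Y:y^2=x^3+t(t-1)^2x$ under the purely inseparable base change $t\mapsto t^p$, computes $\langle P',P'\rangle=1/2$ from $\MWL(Y)$, invokes Shioda's formula $\langle P,P\rangle=\deg\phi\cdot\langle P',P'\rangle=2n+3/2$ for the behaviour of the height pairing under base extension, and then unwinds the explicit height formula (using the known local contributions $c_\infty=0$, $c_0=3/2$, $c_1=1$ at the $III^*$ and $I_0^*$ fibres) to solve for $P.O=n$; the other entries are obtained the same way. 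Your approach instead verifies the section property by hand --- correctly reducing it to the Frobenius identity $(t-1)^p=t^p-1$, though the common factor you cancel is $\zeta^6t^{2n+6}(t-1)^3$ rather than $\zeta^2(t-1)^3$ --- and then gets $O.P$, $Q.P$, $P.R$ from pole orders of $x$-coordinates combined with the translation automorphisms and the group law on $y^2=x^3+Ax$ (your formulas $x_{P\oplus Q}=A/x_P$ and $x_{R\ominus P}=-\tfrac12t^{-2n}(t-1)^{4n+4}$ both check out). Your method is more elementary and self-contained, and it neatly sidesteps the resolution of the additive fibres exactly as you say; the paper's method is less computational, explains where the sections come from in the first place, and generalizes more readily (it yields all intersection numbers without ever writing equations for the sections). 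For the generation statement both arguments coincide: the $22\times22$ Gram minor has determinant $-p^2$, which together with $\det NS(X(p))=-p^{2\sigma}$, $\sigma\ge 1$, forces index one; like the paper, you defer the bookkeeping of which fibre components $P$ and $R$ meet, which is needed to assemble that Gram matrix.
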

\begin{proof}
One can check directly that $P$ and $R$ are sections of the elliptic fibration and the patient reader may calculate their intersection numbers by hand.  
Since $X(p)$ has Artin invariant $\sigma=1$, $\det NS(X)=-p^{2\sigma}=-p^{2}$.  All that remains is to compute the intersection matrix of these four sections and the fiber components. One can check that it has a $22 \times 22$ minor of determinant $-p^2$. This minor corresponds to a basis of $NS(X(p))$.
\end{proof}

For later reference we fix the the following $\mathbb{Z}$-basis of the Néron-Severi group, where the fiber components
are sorted as indicated in Figure \ref{ell1}, and $e_{20}$ is distinguished by $e_{20}.P=1$.
\[(O,F,Q,E_7 (t=\infty),E_7 (t=0),A_3 (\subseteq D_4,t=1),e_{21}=P,e_{22}=R)\]
Blue vertices and edges belong to fibers. Replacing $Q$ by one of the missing components of the $I_0^*$ fiber
results in a $\mathbb{Q}$ instead of a $\mathbb{Z}$-basis. This is predicted by Theorem \ref{thm:mwl}.\\

\begin{figure}
\begin{center}
\begin{tikzpicture}[scale=0.5]
\GraphInit[vstyle=Normal]
\SetGraphUnit{1}
\SetVertexMath
\tikzset{VertexStyle/.style= {fill=blue, inner sep=1pt, shape=circle}}
\Vertex[LabelOut,Lpos=180,x=0,y=0]{11}
\Vertex[LabelOut,Lpos=180,x=0,y=1]{12}
\Vertex[LabelOut,Lpos=180,x=0,y=2]{13}
\Vertex[LabelOut,x=0,y=3]{14}
\Vertex[LabelOut,Lpos=180,x=-1,y=3]{15}
\Vertex[LabelOut,Lpos=180,x=0,y=4]{16}
\Vertex[LabelOut,Lpos=180,x=0,y=5]{17}
\Vertex[NoLabel,Lpos=180,x=0,y=6]{e18}

\SetUpEdge[color=blue]
\Edges(11,12,13,14,16,17,e18)
\Edge(14)(15)

\tikzset{VertexStyle/.style= {fill=black, inner sep=1pt, shape=circle}}
\Vertex[Math,LabelOut,Lpos=-90,x=3,y=0]{Q}
\Vertex[Math,LabelOut,Lpos=90,x=3,y=6]{O}

\tikzset{VertexStyle/.style= {fill=blue, inner sep=1pt, shape=circle}}
\Vertex[LabelOut,x=6,y=0]{4}
\Vertex[LabelOut,x=6,y=1]{5}
\Vertex[LabelOut,x=6,y=2]{6}
\Vertex[LabelOut,Lpos=180,x=6,y=3]{7}
\Vertex[LabelOut,x=7,y=3]{8}
\Vertex[LabelOut,x=6,y=4]{9}
\Vertex[LabelOut,x=6,y=5]{10}
\Vertex[NoLabel,x=6,y=6]{e15}

\Edges(4,5,6,7,9,10,e15)
\Edge(7)(8)
\SetUpEdge[color=black]
\Edges(11,Q,4)
\Edges(e15,O,e18)

\Vertex[LabelOut,x=3,y=2]{19}
\Vertex[LabelOut,Lpos=40,Ldist=-0.15cm,x=3,y=3]{18}
\Vertex[NoLabel,x=3,y=4]{d3}
\Vertex[NoLabel,x=2,y=3]{d4}
\Vertex[LabelOut,x=4,y=3]{20}
\Edge(Q)(19)
\Edge(d3)(O)
\SetUpEdge[color=blue]
\Edges(19,18,d3)
\Edges(d4,18,20)
\end{tikzpicture}
\end{center}
\caption{$24$ $(-2)$-curves of $X$ supporting singular fibers of type $I_0^*,2\times III^*$ (blue) and torsion sections of $\pi$.}
\label{ell1}
\end{figure}

For the intersection matrix in this basis one obtains:
\[
\left(\begin{smallmatrix}	
	-2& 1& 0&0&0&0&0&0&0&0&0&0&0&0&0&0&0&0&0&0&n&n\\
        1& 0& 1& 0&0&0&0&0&0&0&0&0&0&0&0&0&0&0&0&0&1&1\\
        0& 1& -2&1&0&0&0&0&0&0&1&0&0&0&0&0&0&0&1&0&n&n\\
        0& 0& 1&-2&1&0&0&0&0&0&0&0&0&0&0&0&0&0&0&0&1&1\\
        0& 0& 0&1&-2&1&0&0&0&0&0&0&0&0&0&0&0&0&0&0&0&0\\
        0& 0& 0&0&1&-2&1&0&0&0&0&0&0&0&0&0&0&0&0&0&0&0\\
        0& 0& 0&0&0&1&-2&1&1&0&0&0&0&0&0&0&0&0&0&0&0&0\\
        0& 0& 0&0&0&0&1&-2&0&0&0&0&0&0&0&0&0&0&0&0&0&0\\
        0& 0& 0&0&0&0&1&0&-2&1&0&0&0&0&0&0&0&0&0&0&0&0\\
        0& 0& 0&0&0&0&0&0&1&-2&0&0&0&0&0&0&0&0&0&0&0&0\\
        0& 0& 1&0&0&0&0&0&0&0&-2&1&0&0&0&0&0&0&0&0&0&0\\
        0& 0& 0&0&0&0&0&0&0&0&1&-2&1&0&0&0&0&0&0&0&0&0\\
        0& 0& 0&0&0&0&0&0&0&0&0&1&-2&1&0&0&0&0&0&0&0&0\\
        0& 0& 0&0&0&0&0&0&0&0&0&0&1&-2&1&1&0&0&0&0&0&0\\
        0& 0& 0&0&0&0&0&0&0&0&0&0&0&1&-2&0&0&0&0&0&0&0\\
        0& 0& 0&0&0&0&0&0&0&0&0&0&0&1&0&-2&1&0&0&0&0&0\\
        0& 0& 0&0&0&0&0&0&0&0&0&0&0&0&0&1&-2&0&0&0&0&0\\
        0& 0& 0&0&0&0&0&0&0&0&0&0&0&0&0&0&0&-2&1&1&0&0\\
        0& 0& 1&0&0&0&0&0&0&0&0&0&0&0&0&0&0&1&-2&0&0&0\\
        0& 0& 0&0&0&0&0&0&0&0&0&0&0&0&0&0&0&1&0&-2&1&0\\
        n&1&n&1&0&0&0&0&0&0&0&0&0&0&0&0&0&0&0&1&-2&2 n\\
        n&1&n&1&0&0&0&0&0&0&0&0&0&0&0&0&0&0&0&0&2 n&-2\\
\end{smallmatrix}
\right)
\]

In the remaining part of this section we will explain how the sections $P, R$ were found and give an alternative
way of computing their intersection numbers.

Recall that we assume that $p \equiv 3 \mod 4$ and write $p=4n+3$.
Consider the purely inseparable base change $t\mapsto t^p$. This changes the equation as follows.
\[y^2=x^3+t^{12n+9}(t-1)^{8n+6}x\]
We minimize this equation using the birational map 
\[(x,y,t)\mapsto \left(\frac{x}{t^{6n+4}(t-1)^{4n+2}},\frac{y}{t^{9n-6}(t-1)^{6n+3}},t\right).\]
This leads to the surface $Y$ given by
\[Y: \quad y^2=x^3+t(t-1)^2x.\]
After another base change $t\mapsto t^p$ we get
\[y^2=x^3+t^{4n+3}(t-1)^{8n+6}x\]
 and minimizing the fibration 
\[\left(\frac{y}{t^{3n}(t-1)^{6n+3}}\right)^2=\left(\frac{x}{t^{2n}(t-1)^{4n+2}}\right)^3+t^3(t-1)^2\frac{x}{t^{2n}(t-1)^{4n+2}}\]
we recover $X$.\\

Instead of directly searching for sections on $X$ we will exhibit two sections on $Y$ and pull them back to $X$.
The $j$-invariant of $Y$ is still equal to $1728$, but $\Delta=-2^6t^3(t-1)^6$. 
For $p\neq 3$ this leads to two singular fibers of type $III$ over $t=0,\infty$ and a singular fiber of type $I_0^*$ over $t=1$.
The Euler number of this surface is $2e(III)+e(I_0^*)=2\cdot 3 + 6=12$. By general theory it is rational. 
The trivial lattice is isomorphic to
\[\triv (Y) \cong U \oplus 2A_1 \oplus D_4.\]
This lattice is of determinant $-16$ and rank $8$.
From the theory of elliptically fibered rational surfaces we know that $\rk NS(Y)=10$, $\det NS(Y)=-1$,
which implies that $Y$ has Mordell-Weil rank $2$.
Define \[\triv'(Y):=\triv(Y)\otimes_{\mathbb{Z}} \mathbb{Q} \cap NS(Y).\]
Then by Theorem \ref{thm:mwl} $\triv'(Y)/\triv(Y)\cong \MW_{tors}$. Since 
$\{x=y=0\}$ is a 2-torsion section, we know that 
\[\det \triv'(Y) = \det \triv(Y) / [\triv'(Y):\triv(Y)]^2\in \{-1,-4\}.\] 
As even unimodular lattices of signature $(1,7)$ do not exist, $-1$ is impossible. We conclude that $x=y=0$ is the only torsion section.\\

To find a section of infinite order, we first determine the Mordell-Weil lattice and then translate this information
to equations of the section. Since $j=1728$ the elliptic curve admits complex multiplication given by
$(x,y)\mapsto(-x,iy)$. Obviously, $Q$ and $O$ are the unique sections fixed by this action.
Hence the Mordell-Weil lattice admits an isometry of order four which, viewed as an element of $O(2)$, may only be a rotation by $\pm \tfrac{\pi}{2}$.
Up to isometry all positive definite lattices of rank $2$ admitting an isometry of order four have a Gram matrix of the form 
\[ \left( \begin{matrix}
 a& 0\\
 0 & a\\
 \end{matrix}\right)
\]
for some $a>0$.  The formula
\[\det NS(Y) = (-1)^r \det \MWL(Y) \det \triv'(Y)\]
where $r=rk \MWL(Y)=2$, $\det NS(Y)=-1$, $\det \triv'(Y)=4$, resulting from the definition of $\MWL(Y)$ via the orthogonal projection w.r.t. $\triv'(Y)$, yields $a=1/2$.
 \[\MWL (Y)\cong \left(
 \begin{matrix}
 1/2& 0\\
 0 & 1/2\\
 \end{matrix}\right).\]
We search for a section $P$ with 
\[1/2=\langle P,P\rangle
:=2 \chi + 2 P.O - c_0(P,P)-c_\infty(P,P) - c_1(P,P)\]
where $\chi=\chi(\mathcal{O}_Y)=1$ is the Euler characteristic of $Y$ and
$c_0,c_\infty \in \{0,1/2\}, c_1 \in \{0,1\}$ are the contributions at the singular fibers, $P.O \in \mathbb{N}$ is the intersection number.
This immediately implies $P.O=0$ and $c_1=1$. 
Let us assume $c_0=1/2, c_\infty=0$. 
The section $P$ can be given by $(x(t),y(t))$ where $x,y$ are rational functions.
Over the chart containing $\infty$ it is given by 
$(\hat{x}(s),\hat{y}(s))=(s^2x(1/s),s^3y(1/s))$ where $s=1/t$.
As poles of these functions correspond to intersections with the zero section,
we know that $x,y,\hat{x},\hat{y}$ are actually polynomials. Therefore, $\deg x \leq 2$ and $\deg y \leq 3$.
Furthermore, if $P$ intersects the same fiber component as the zero section, then the contribution $c_\nu$
of that fiber is zero. The other components arise by blowing up the singularities of the Weierstraß model in $\{x=y=t(t-1)=0\}$. Hence $x(0)=y(0)=x(1)=y(1)=0$. Putting this information together we obtain 
$x=at(t-1)$ and $y=t(t-1)b$ for some constant $a$ and a polynomial $b(x)$ of degree one.
A quick calculation yields the sections
\begin{align}
P': &\quad x(t)=\zeta^2t(t-1)  && y(t)=\zeta^3t(t-1)^2,\\
R': &\quad x(t)=-\zeta^2t(t-1) && y(t)=-\zeta t(t-1)^2
\end{align}
for $\zeta^4+1=0$. Note that $\MWL(Y)$ contains exactly four sections of height $1/2$.
Furthermore, $\langle P', R' \rangle=0$ (otherwise $P=\ominus R$ which is clearly false). 
The two further Galois conjugates of $\zeta$ correspond to the missing two sections $\ominus P$ and $\ominus R$.\\

Now we shall pull back $P'$ and $R'$ to $X$ via the map
\[\phi:\; X \rightarrow Y, \quad (x,y,t) \mapsto (xt^{2n}(t-1)^{4n+2},yt^{3n}(t-1)^{6n+3},t^p)\]
and get the sections
\begin{align}
P:& \quad x=\zeta^2t^{2n+3}(t-1)  && y=\zeta^3t^{n+3}(t-1)^{2n+3},\\
R:& \quad x=-\zeta^2t^{2n+3}(t-1) && y=-\zeta t^{n+3}(t-1)^{2n+3}.
\end{align}

It remains to compute the intersection numbers involving $P$ and $R$. This can be done by blowing up the singularities and then computing the intersections.
However, by applying some more of Shioda's theory we can avoid the blowing ups.
From the behavior of the height pairing under base extension (cf. \cite[Prop. 8.12]{shioda:mwl}) we know that
\[\langle P,P\rangle=\deg \phi \langle P' , P' \rangle=p \langle P' , P' \rangle=2n+3/2\]
and get the equation
\[2n+3/2=4 + 2 P.O - c_0(P,P)-c_\infty(P,P) - c_1(P,P).\]
Note that a fiber of type $III^*$ has only two simple components. 
Since P meets the same component over infinity as the zero section and passes through the singularities at $t=0,1$, we know that $c_\infty=0$, $c_0=3/2$ and $c_1=1$.
We conclude that $P.O=n$. The other intersection numbers can be calculated accordingly. 
In this way we could have calculated the intersection matrix of $NS(X(p))$ without knowing equations for 
the extra sections.\\

The section $P$ induces an automorphism of the surface by fiberwise addition. We shall denote it by $(\oplus P)$. 
The matrix of its representation on $NS$ is obtained as follows:
\begin{itemize}
 \item Compute the basis representation of the sections $Q\oplus P, 2P$ and $R \oplus P$.
 \item Any section $S$ is mapped to $P \oplus S$ under $f_*$ and the fiber is fixed.
 \item The action of $(\oplus P)$ on the Néron-Severi group preserves each singular fiber. Since it is an isometry it can 	be determined by its action on sections.
 \item Invert the resulting matrix to get $f^*=f_*^{-1}$.
\end{itemize}
A basis representation of $P \oplus Q$ is obtained as follows:
Start with $P+Q \in NS$ and subtract $nO$ such that the resulting divisor $D$ has $D.F=1$.
Add or subtract fiber components until $D$ meets each singular fiber in exactly one simple component.
Finally add multiples of $F$ such that $D^2=-2$.
For example the basis representation of the section $P\oplus R$ is given by
\[(1, 2,-2, 0, 0, 0, 0, 0, 0, 0,-3,-4,-5,-6,-3,-4,-2,-1,-2, 0, 1, 1).\]

\section{Alternative Elliptic Fibrations}
The automorphism constructed in the last section has zero entropy. 
The reason for this is that it fixes the fibers. We shall overcome this obstruction by 
combining different fibrations and their sections.\\

Reducible singular fibers of elliptic fibrations are extended ADE-configurations of $(-2)$-curves. 
Conversely, let $X$ be a K3 surface and $F$ an extended $ADE$ configuration of $(-2)$-curves.
Then the linear system $|F|$ is an elliptic pencil with the extended $ADE$ configuration as singular fiber. See
\cite{kondo:elliptic pencil},\cite{shafarevic:elliptic pencil} for details.
We use this fact to detect additional elliptic fibrations in the graph of $(-2)$-curves.
Irreducible curves $C$ are either fiber components, sections or multisections, depending on whether $C.F=0,1$ or $>1$.
Be aware that the $(-2)$ curves occur with multiplicities in $F$. Hence it is possible that $C.F>1$ even if 
$C$ meets only a single $(-2)$-curve of the configuration. 
In general an elliptic pencil does not necessarily admit a section. But once its existence is guaranteed, we may
choose it as zero section. Then, by Theorem \ref{thm:mwl}, multisections still induce sections once modified by fiber components and the zero section as
sketched above. \\

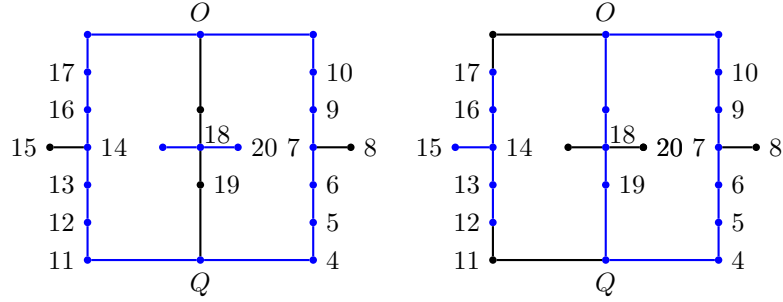
\begin{figure}
\begin{minipage}[b]{0.3333\linewidth}
\begin{tikzpicture}[scale=0.5]
\GraphInit[vstyle=Normal]
\SetGraphUnit{1}
\SetVertexMath
\tikzset{VertexStyle/.style= {fill=blue, inner sep=1pt, shape=circle}}
\Vertex[LabelOut,Lpos=180,x=0,y=0]{11}
\Vertex[LabelOut,Lpos=180,x=0,y=1]{12}
\Vertex[LabelOut,Lpos=180,x=0,y=2]{13}
\Vertex[LabelOut,x=0,y=3]{14}
\Vertex[LabelOut,Lpos=180,x=0,y=4]{16}
\Vertex[LabelOut,Lpos=180,x=0,y=5]{17}
\Vertex[NoLabel,Lpos=180,x=0,y=6]{e18}
\Vertex[LabelOut,Lpos=-90,x=3,y=0]{Q}
\Vertex[LabelOut,Lpos=90,x=3,y=6]{O}
\Vertex[LabelOut,x=6,y=0]{4}
\Vertex[LabelOut,x=6,y=1]{5}
\Vertex[LabelOut,x=6,y=2]{6}
\Vertex[LabelOut,Lpos=180,x=6,y=3]{7}
\Vertex[LabelOut,x=6,y=4]{9}
\Vertex[LabelOut,x=6,y=5]{10}
\Vertex[NoLabel,x=6,y=6]{e15}
\Vertex[LabelOut,Lpos=40,Ldist=-0.15cm,x=3,y=3]{18}
\Vertex[NoLabel,x=2,y=3]{d4}
\Vertex[LabelOut,x=4,y=3]{20}

\tikzset{VertexStyle/.style= {fill=black, inner sep=1pt, shape=circle}}
\Vertex[LabelOut,Lpos=180,x=-1,y=3]{15}
\Vertex[LabelOut,x=3,y=2]{19}
\Vertex[NoLabel,x=3,y=4]{d3}
\Vertex[LabelOut,x=7,y=3]{8}

\SetUpEdge[color=blue]
\Edges(11,12,13,14,16,17,e18)
\Edges(4,5,6,7,9,10,e15)
\Edges(11,Q,4)
\Edges(e15,O,e18)
\Edges(d4,18,20)

\SetUpEdge[color=black]
\Edge(14)(15)
\Edge(7)(8)
\Edge(Q)(19)
\Edge(d3)(O)
\Edges(19,18,d3)
\SetUpEdge[color=blue]
\end{tikzpicture}
\end{minipage}
\qquad \quad
\begin{minipage}[b]{0.3333\linewidth}
\begin{tikzpicture}[scale=0.5]
\GraphInit[vstyle=Normal]
\SetGraphUnit{1}
\SetVertexMath
\tikzset{VertexStyle/.style= {fill=blue, inner sep=1pt, shape=circle}}
\Vertex[LabelOut,Lpos=180,x=0,y=1]{12}
\Vertex[LabelOut,Lpos=180,x=0,y=2]{13}
\Vertex[LabelOut,x=0,y=3]{14}
\Vertex[LabelOut,Lpos=180,x=0,y=4]{16}
\Vertex[LabelOut,Lpos=180,x=0,y=5]{17}
\Vertex[LabelOut,Lpos=-90,x=3,y=0]{Q}
\Vertex[LabelOut,Lpos=90,x=3,y=6]{O}
\Vertex[NoLabel,x=6,y=6]{e15}
\Vertex[LabelOut,x=6,y=0]{4}
\Vertex[LabelOut,x=6,y=1]{5}
\Vertex[LabelOut,x=6,y=2]{6}
\Vertex[LabelOut,Lpos=180,x=6,y=3]{7}
\Vertex[LabelOut,x=6,y=4]{9}
\Vertex[LabelOut,x=6,y=5]{10}
\Vertex[LabelOut,Lpos=40,Ldist=-0.15cm,x=3,y=3]{18}
\Vertex[LabelOut,Lpos=180,x=-1,y=3]{15}
\Vertex[LabelOut,x=3,y=2]{19}
\Vertex[NoLabel,x=3,y=4]{d3}

\tikzset{VertexStyle/.style= {fill=black, inner sep=1pt, shape=circle}}
\Vertex[LabelOut,x=4,y=3]{20}
\Vertex[NoLabel,x=2,y=3]{d4}
\Vertex[LabelOut,x=4,y=3]{20}
\Vertex[LabelOut,x=7,y=3]{8}
\Vertex[NoLabel,Lpos=180,x=0,y=6]{e18}
\Vertex[LabelOut,Lpos=180,x=0,y=0]{11}

\SetUpEdge[color=blue]
\Edges(12,13,14,16,17)
\Edges(4,5,6,7,9,10,e15)
\Edges(Q,4)
\Edges(e15,O)
\Edge(Q)(19)
\Edge(O)(d3)
\Edges(19,18,d3)
\Edge(14)(15)

\SetUpEdge[color=black]
\Edges(d4,18,20)
\Edge(7)(8)
\Edge(Q)(11)
\Edge(O)(e18)
\Edge(11)(12)
\Edge(17)(e18)
\end{tikzpicture}
\end{minipage}
\caption{$\pi '$ with $I_{16}$ and $I_4$ fibers 
and $\pi''$ with $I_{12}$ and $IV^*$ fibers. }\label{fig:alternative ellfib}
\end{figure}

The first fibration $\pi'$ is induced by the outer circle of $(-2)$-curves which is a singular fiber of type $I_{16}$.
There is a second singular fiber of type $I_4$. Three of its components are visible in Figure \ref{fig:alternative ellfib}. The curve $e_8$ ($=$ vertex labeled by '8')  is a section since it meets $I_{16}$ exactly in a simple component. We take it as zero section. Then the torsion sections are $e_{15},e_{18}$ and $e_{19}$.
The second fibration $\pi''$ is induced by the right inner circle of $(-2)$-curves. It has singular fibers of type $I_{12}$ and $IV^*$ and again we take $e_8$ as zero section. A simple component of the $IV^*$ fiber is not visible in
Figure \ref{fig:alternative ellfib}.
In both cases $P$ is a multi section and induces a section of each fibration denoted by $P'$ and $P''$.
For example the class of $P'\in NS(X(p))$ is given by
\begin{align*}
 P'=(&n,n,n+1,2,2-n,-2n+2,2-3n,1-2n,1-2n,-n,0,\\
&-2n-2,-3-3n,-2n-2,-2n-2,-1-n,0,0,0,1,0).
\end{align*}

\section{Salem degree 22 automorphism}
 
 We consider the automorphism 
 \[f:=(\oplus R)\circ (\oplus P)\circ (\oplus P')\circ (\oplus P'').\] 
 on $X(p)$ for $p \equiv 3 \mod 4$.
 Using a computer algebra system one computes the characteristic polynomial of $f^*|NS(X(p))$:
\[\mu (f^*)=x^{11}g(x+1/x),\]
where
 \begin{align*}
 g(x):=& 8 n ^2 + 88 n + 67 \\
     + &(-88 n ^3 - 392 n^2  - 976 n - 574) x\\
     + &(-232 n^3  - 1474 n^2  - 2854 n - 1464) x^2\\
     + &(534 n^3  + 2526 n^2  + 4605 n + 2359) x^3\\
     + &(578 n^3  + 3415 n^2  + 6196 n + 3062) x^4\\
     + &(-568 n^3  - 2749 n^2  - 4587 n - 2245) x^5\\
     + &(-466 n^3  - 2689 n^2  - 4681 n - 2253) x^6\\
     + &(206 n^3  + 1014 n^2  + 1600 n + 770) x^7\\
     + &(148 n^3  + 849 n^2  + 1426 n + 670) x^8\\  
     + &(-24 n^3  - 120 n^2  - 182 n - 91) x^9\\
     + &(-16 n^3  - 92 n^2  - 150 n - 69) x^{10}\\   
     + &x^{11}.
\end{align*}
 By Theorem \ref{thm:salem factor}, $\mu (f^*)$ is either a degree 22 Salem polynomial or divisible
 by a cyclotomic polynomial of degree at most $22$. There are only finitely many cyclotomic polynomials of a given degree.
 We can exclude the second case directly by computing the remainder after division for each such polynomial.
This proves Theorem \ref{thm:salem deg22}.
\section*{Acknowledgements} 
I thank my advisor Matthias Schütt who introduced me to this topic and the isotrivial fibration. 
Thanks to Davide Veniani and Víctor González-Alonso for discussions and helpful comments.

%
%
%

\end{document}